\def\pmod #1{\ ({\rm{mod}}\ #1)}
\def\bg{\bigg}
\def\({\bg(}
\def\){\bg)}
\theoremstyle{plain}
\newtheorem{theorem}{Theorem}
\newtheorem{lemma}{Lemma}
\theoremstyle{definition}
\newtheorem*{acknowledgment}{Acknowledgment}
\theoremstyle{remark}
\newtheorem{remark}{Remark}
\begin{document}

	\title[A note on additive complements of squares]{A note on additive complements of squares}
	
	\author[Y. Ding, Y.--C. Sun, L.--Y. Wang and Y. Xia]{Yuchen Ding, Yu--Chen Sun, Li--Yuan Wang and Yutong Xia}
	
	\address {(Yuchen Ding) School of Mathematical Science,  Yangzhou University, Yangzhou 225002,  People's Republic of China}
	\email{\tt ycding@yzu.edu.cn}
	
	\address {(Yu--Chen Sun) Department of Mathematics and Statistics, University of Turku, Turku 20014 , Finland}
	\email{\tt yuchensun93@163.com}
	
	\address {(Li-Yuan Wang) School of Physical and Mathematical Sciences, Nanjing Tech University, Nanjing 211816, People's Republic of China}
	\email{\tt wly@smail.nju.edu.cn}
	
	\address {(Yutong Xia) School of Mathematical Science,  Yangzhou University, Yangzhou 225002,  People's Republic of China}
	\email{\tt 1220045260@qq.com}
	
	\begin{abstract}
		Let $\mathcal{S}=\{1^2,2^2,3^2,...\}$ be the set of squares and $\mathcal{W}=\{w_n\}_{n=1}^{\infty} \subset \mathbb{N}$ be an additive complement of $\mathcal{S}$ so that $\mathcal{S} + \mathcal{W} \supset \{n \in \mathbb{N}: n \geq N_0\}$ for some $N_0$. Let $\mathcal{R}_{\mathcal{S},\mathcal{W}}(n) = \#\{(s,w):n=s+w, s\in \mathcal{S}, w\in \mathcal{W}\} $.
		
		In 2017, Chen-Fang \cite{C-F} studied the lower bound of $\sum_{n=1}^NR_{\mathcal{S},\mathcal{W}}(n)$.
		In this note, we improve Cheng-Fang's result and get that
		$$\sum_{n=1}^NR_{\mathcal{S},\mathcal{W}}(n)-N\gg N^{1/2}.$$
		As an application, we make some progress on a problem of Ben Green problem by showing that
		$$\limsup_{n\rightarrow\infty}\frac{\frac{\pi^2}{16}n^2-w_n}{n}\ge \frac{\pi}{4}+\frac{0.193\pi^2}{8}.$$
	\end{abstract}
	
	\thanks{2020 {\it Mathematics Subject Classification}.
		Primary 11B13; Secondary 11B75.
		\newline\indent {\it Keywords}. Additive complements, Squares.}
	
	\maketitle
	
	\section{Introduction}
	\setcounter{lemma}{0}
	\setcounter{theorem}{0}
	\setcounter{corollary}{0}
	\setcounter{remark}{0}
	\setcounter{equation}{0}
	\setcounter{conjecture}{0}
	Let $\mathbb{N}$ be the set of all non-negative integers.
	We say that $A,B \subset \mathbb{N}$ are {\it additive complements} if $A+B \supset \{n\in\mathbb{N}: n \geq N_0\}$ for some fixed $N_0$. If $A,B$ are additive complements, we say that $A$ is an additive complement of $B$.
	Let $\mathcal{S}=\{1^2,2^2,3^2,...\}$ be the set of squares and $\mathcal{W} \subset \mathbb{N}$ be an additive complement of $\mathcal{S}$.
	Let $N$ be a large integer and $\mathcal{W}(N)$ be the number of elements of $\mathcal{W}$ not exceeding $N$. Clearly, we have
	$\mathcal{W}(N)\sqrt{N}\ge N-N_0$ for some given integer $N_0$, from which we deduce that
	$$\liminf_{N\rightarrow\infty}\mathcal{W}(N)/\sqrt{N}\ge 1.$$
	It is of great interest to ask whether $\liminf_{N\rightarrow\infty}\mathcal{W}(N)/\sqrt{N}$ is strictly larger than $1$.
	
	In 1956, Erd\H os \cite{Erd} posed this problem, which was later settled affirmatively by Moser \cite{Mos} with an accurate lower bound $1.06$. The number was then improved in a few articles \cite{Abb,Bal,B-S,D-H,Ram,Ram2}. Up to now the best result is
	\begin{align}\label{eq1-1}
		\liminf_{N\rightarrow\infty}\mathcal{W}(N)/\sqrt{N}\ge 4/\pi,
	\end{align}
	given by Cilleruelo \cite{Cil}, Habsieger \cite{Hab}, Balasubramanian and Ramana \cite{B-R}.
	
	Note that $\mathcal{W}(N)/\sqrt{N} >1$ implies that there are some integers $n \in \mathbb{N}$ which do not have a unique  representation. Thus it is worthwhile to study the number of representations for writing $n$ as a sum $s+w$ with $s \in \mathcal{S}$ and $w \in \mathcal{W}$.
	
	For any $A,B \subset \mathbb{N}$, let
	$$R_{A,B}(n)=\#\{(a,b):n=a+b,a\in A,b\in B\}.$$
	Obviously, if $A,B$ are complements, then
	$$\liminf_{N\rightarrow\infty}\frac{1}{N}\sum_{n\le N}R_{A,B}(n)\ge 1.$$
	Ben Green had a nice observation that for $B=\{b_1,b_2,...\}$ with $b_n=\frac{\pi^2}{16}n^2+o(n^2)$,
	$$\lim_{N\rightarrow \infty}\frac{1}{N}\sum_{n=1}^NR_{\mathcal{S},B}(n)=1.$$
	Later, he asked Fang (private communication, see \cite{C-F}) whether there is an additive complement of $\mathcal{S}$ has the same property as $B$. Namely, can we find an additive complement $\mathcal{W}=\{w_n\}_{n=1}^{\infty}$ of $\mathcal{S}$ satisfying the asymptotic condition $w_n=\frac{\pi^2}{16}n^2+o(n^2)$?
	
	Motivated by Ben Green's problem, Chen and Fang \cite[Theorem 1.1]{C-F} proved the following result.
	
	Let $N$ be a sufficiently large integer. For any additive complement $\mathcal{W}$ of $\mathcal{S}$, we have
	\begin{align}\label{eq1-2}
		\sum_{n=1}^NR_{\mathcal{S},\mathcal{W}}(n)-N\ge \frac{1}{2\log 4}\mathcal{W}(2\sqrt{N})\log \mathcal{W}(2\sqrt{N})
	\end{align}
	From (\ref{eq1-1}), one can immediately obtain the following corollary:
	If $\mathcal{W}$ is any complement of $\mathcal{S}$, then
	\begin{equation}\label{CFmain}
		\sum_{n=1}^NR_{\mathcal{S},\mathcal{W}}(n)-N\gg N^{1/4}\log N.
	\end{equation}
	
	Chen and Fang \cite[Remark 1]{C-F} also gave a counterexample to show that (\ref{eq1-2}) does not always hold, if $\mathcal{S}$ and $\mathcal{W}$ are substituted with general additive complements $A,B$.
	
	As an application of (\ref{eq1-2}), Chen and Fang considered Ben Green's problem and showed that
	$$\limsup_{n\rightarrow\infty}\frac{\frac{\pi^2}{16}n^2-w_n}{n^{1/2}\log n}\ge\sqrt{\frac{2}{\pi}}\frac{1}{\log 4}$$
	for any additive complement $\mathcal{W}=\{w_n\}_{n=1}^{\infty}$ of $\mathcal{S}$. They further conjectured that
	$$\limsup_{n\rightarrow\infty}\frac{\frac{\pi^2}{16}n^2-w_n}{n^{1/2}\log n}=\infty,$$
	which was later confirmed by the first author \cite{Ding} with the following stronger form
	\begin{equation}\label{ding_lim}
		\limsup_{n\rightarrow\infty}\frac{\frac{\pi^2}{16}n^2-w_n}{n}\ge \frac{\pi}{4}.
	\end{equation}
	The result (\ref{ding_lim}) only use the trivial bound
	$$\sum_{n=1}^NR_{\mathcal{S},\mathcal{W}}(n)-N\ge n_0$$
	for some negative integer $n_0$, but if one can replaced $\ge n_0$ by $\gg N^{1/2}$, then one can improve (\ref{ding_lim}), see the proof of Theorem \ref{theorem2}.

	In this note, we first give an improvement of  (\ref{CFmain}) and obtain the following theorem.
	\begin{theorem}\label{theorem1}
		Let $\mathcal{W}$ be an additive complement of $\mathcal{S}$. For sufficiently large  integers $N$ (depending on $\mathcal{W}$) we have
		$$\sum_{n=1}^NR_{\mathcal{S},\mathcal{W}}(n)-N\ge0.193 N^{1/2}.$$
	\end{theorem}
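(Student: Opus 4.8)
The plan is to begin from the exact pair–counting identity obtained by grouping each representation according to the square it uses. Writing $\mathcal{W}(t)=\#\{w\in\mathcal{W}:w\le t\}$ and $F(N)=\sum_{n\le N}R_{\mathcal{S},\mathcal{W}}(n)$, I count the pairs $(k^2,w)$ with $k^2+w\le N$ by fixing $k$, which gives
\[
F(N)=\sum_{k=1}^{\lfloor\sqrt N\rfloor}\mathcal{W}(N-k^2),
\]
while fixing $w$ gives the dual form $F(N)=\sum_{w\in\mathcal{W},\,w<N}\lfloor\sqrt{N-w}\rfloor$. I would work with the first expression, since each summand $\mathcal{W}(N-k^2)$ is an exact integer count with no floor losses, and the theorem becomes the assertion $\sum_{k=1}^{\lfloor\sqrt N\rfloor}\mathcal{W}(N-k^2)\ge N+0.193\,N^{1/2}$.

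Next I would reduce the pointwise estimate to an averaged one. Since $R_{\mathcal{S},\mathcal{W}}(n)\ge 1$ for every $n\ge N_0$, the quantity $G(N):=F(N)-N$ satisfies $G(N)-G(N-1)=R_{\mathcal{S},\mathcal{W}}(N)-1\ge 0$, so $G$ is eventually non-decreasing; hence $\sum_{N'\le X}G(N')\le X\,G(X)+O(1)$, and any bound of the shape $\sum_{N'\le X}G(N')\ge\beta X^{3/2}$ upgrades to $G(X)\ge\beta X^{1/2}-o(1)$ with the \emph{same} constant $\beta=0.193$. The averaged sum has the smoothed shape $\sum_{N'\le X}F(N')=\sum_{k\le\sqrt X}\left(\int_0^{X-k^2}\mathcal{W}(t)\,dt+\mathcal{W}(X-k^2)\right)$, whose summands vanish to higher order at the endpoint $k\approx\sqrt X$ than $\mathcal{W}(N-k^2)$ does, and are therefore easier to estimate cleanly; this makes the averaged route the more tractable one for isolating the $N^{1/2}$-scale surplus.

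The delicate point, and the main obstacle, is that the density input $\liminf_{t\to\infty}\mathcal{W}(t)/\sqrt t\ge 4/\pi$ from (\ref{eq1-1}) sits exactly at the covering threshold. Indeed, with $\mathcal{W}(t)\approx\frac{4}{\pi}\sqrt t$ one has $\int_0^N\frac{\mathcal{W}(t)}{2\sqrt{N-t}}\,dt\to N$, using $\int_0^N\frac{\sqrt t}{2\sqrt{N-t}}\,dt=\frac{\pi}{4}N$, so the leading terms cancel and the surplus $0.193\,N^{1/2}$ is a genuinely second-order effect. In particular, simply inserting $\mathcal{W}(N-k^2)\ge\frac{4}{\pi}\sqrt{N-k^2}$ loses precisely at the $N^{1/2}$ discretization/endpoint scale and produces a \emph{negative} bound, so the gain over the trivial estimate cannot be read off from the density bound alone; this is also why Chen--Fang's method in (\ref{eq1-2}) only reaches the lower-order $N^{1/4}\log N$ in (\ref{CFmain}). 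To force the positive surplus I would combine the covering inequality $F(t)\ge t$ with the \emph{exact} second-order asymptotics of $\sum_{k\le\sqrt N}\sqrt{N-k^2}$ and of the integrated contribution of $\mathcal{W}$, carefully retaining the $N^{1/2}$-terms the naive argument discards — the surplus ultimately reflecting the arithmetic rigidity of the squares, which cannot tile an interval. Pinning down the explicit value $0.193$ is the technical heart: it should arise from optimizing the resulting second-order quantity, and I expect the hardest step to be controlling the interaction between the discreteness of $\mathcal{W}$ and the spacing of consecutive squares near the top end $k\approx\sqrt N$, where the density bound is unavailable and the endpoint behaviour dominates.
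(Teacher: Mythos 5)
Your reduction steps are fine as far as they go (the identity $F(N)=\sum_{k\le\sqrt N}\mathcal{W}(N-k^2)$, the monotonicity of $G(N)=F(N)-N$ for $N\ge N_0$, and the averaging upgrade), but there is a genuine gap at exactly the point you yourself flag as ``the technical heart'': you never produce a mechanism that generates the $\gg N^{1/2}$ extra representations. You correctly observe that inserting the density bound (\ref{eq1-1}) into the smooth sum cancels at leading order; in fact it comes out \emph{negative} at second order, since $\sum_{k\le\sqrt N}\sqrt{N-k^2}\le\frac{\pi}{4}N-\frac{1}{2}\sqrt N$ --- and this is precisely the inequality the paper uses in the \emph{opposite} direction, as (\ref{3-4}), to get the upper bound (\ref{3-5}) in the proof of Theorem \ref{theorem2}. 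Refining the endpoint and discretization asymptotics of $\sum_k\sqrt{N-k^2}$, averaged or not, can only sharpen upper bounds for $F(N)$. A lower bound on $\sum_{n\le N}(R_{\mathcal{S},\mathcal{W}}(n)-1)$ requires exhibiting actual collisions $s+w=s'+w'$, i.e.\ specific integers with at least two representations, and no second-order analysis of the smooth count does that: the input (\ref{eq1-1}) is only a liminf and is compatible with configurations of $\mathcal{W}$ in which your averaged surplus carries no sign information. Your invocation of ``the arithmetic rigidity of the squares'' names the missing idea rather than supplying it.

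For comparison, the paper's mechanism (Lemma \ref{lem1}) is combinatorial, not asymptotic. Set $K=\lfloor\delta N^{1/2}\rfloor$ and split $\mathcal{W}$ into the $4K$ residue classes modulo $4K$. Within one class, any two elements $d_1<d_s\le\delta_0 N$ satisfy $d_s-d_1=4Kn_s$, and the factorization $x^2-y^2=(x-y)(x+y)$ with the explicit choice $x=K+n_s$, $y=|K-n_s|$ gives $x^2+d_1=y^2+d_s$; the constraints $\delta^2+\delta_0\le 1$ and $\frac{1}{16}\delta_0^2/\delta^2+\delta_0<1$ guarantee this common value is below $N$, so each class $\mathcal{W}_j$ contributes at least $\mathcal{W}_j(\delta_0 N)-2$ collisions. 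Summing over the $4K$ classes yields
\begin{equation*}
\sum_{n\le N}\left(R_{\mathcal{S},\mathcal{W}}(n)-1\right)\ \ge\ \mathcal{W}(\delta_0 N)-8K-N_0,
\end{equation*}
and only now does (\ref{eq1-1}) enter, giving $\left(\frac{4}{\pi}\sqrt{\delta_0}-8\delta-\varepsilon\right)N^{1/2}$. In particular the constant $0.193$ is the numerical optimum of $\frac{4}{\pi}\sqrt{\delta_0}-8\delta$ under the displayed constraints (attained near $\delta=0.022$, $\delta_0=0.084$), not a second-order term of a circle-type sum as you predicted; your proposed optimization would have no such quantity to optimize until a collision-producing device of this kind is in place.
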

	\begin{remark}
		Cilleruelo \cite{Cil} conjectured that
		$$\sum_{n=1}^NR_{\mathcal{S},\mathcal{W}}(n)-N\ge N+o(N).$$  Our new bound in Theorem \ref{theorem1} makes some progress to Cilleruelo's conjecture but the resolution of this conjecture is apparently too much to hope for at present.
	\end{remark}
	By applying Theorem \ref{theorem1} and the arguments in \cite{Ding}, we can improve on (\ref{ding_lim}).
	\begin{theorem}\label{theorem2} For any additive complement $\mathcal{W}=\{w_n\}_{n=1}^{\infty}$ of $S$, we have
		\begin{align}\label{green2}
			\limsup_{n\rightarrow\infty}\frac{\frac{\pi^2}{16}n^2-w_n}{n}\ge \frac{\pi}{4}+\frac{0.193\pi^2}{8}.
		\end{align}
	\end{theorem}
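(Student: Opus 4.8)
The plan is to argue by contradiction, following the scheme of \cite{Ding} but feeding in the stronger lower bound of Theorem \ref{theorem1} in place of the trivial bound used there. Write $L=\limsup_{n\to\infty}\big(\tfrac{\pi^2}{16}n^2-w_n\big)/n$ and suppose, for contradiction, that $L<\tfrac{\pi}{4}+\tfrac{0.193\pi^2}{8}$. I would fix a constant $c$ with $L<c<\tfrac{\pi}{4}+\tfrac{0.193\pi^2}{8}$, so that by the definition of the limit superior there is an $m_0$ with
$$w_m>\frac{\pi^2}{16}m^2-cm\qquad(m\ge m_0).$$
The whole argument then consists in showing that this lower bound on the $w_m$ forces the partial sums $\sum_{n\le N}R_{\mathcal S,\mathcal W}(n)$ to be too small to be compatible with Theorem \ref{theorem1}.

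The first step is the elementary identity obtained by counting the pairs $(s,w)$ with $s+w\le N$ according to the value $s=k^2$, namely
$$\sum_{n=1}^{N}R_{\mathcal S,\mathcal W}(n)=\sum_{k=1}^{\lfloor\sqrt N\rfloor}\mathcal W\big(N-k^2\big).$$
Next I would convert the hypothesis on the $w_m$ into an upper bound for the counting function $\mathcal W$. Since $w_m>\tfrac{\pi^2}{16}m^2-cm$ for $m\ge m_0$, any index $m$ with $w_m\le x$ satisfies $\tfrac{\pi^2}{16}m^2-cm<x$; solving this quadratic inequality in $m$ and absorbing the $O(1)$ contribution of the finitely many indices $m<m_0$ gives
$$\mathcal W(x)\le \frac{4}{\pi}\sqrt{x}+\frac{8c}{\pi^2}+O\big(x^{-1/2}\big)$$
for all large $x$.

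Inserting this bound into the identity, the contribution of the constant $\tfrac{8c}{\pi^2}$ over the $\lfloor\sqrt N\rfloor$ values of $k$ is $\tfrac{8c}{\pi^2}\sqrt N+O(1)$, while the main term is $\tfrac{4}{\pi}\sum_{k\le\sqrt N}\sqrt{N-k^2}$. The crucial point is to evaluate this last sum to precision $o(\sqrt N)$: comparing it with $\int_0^{\sqrt N}\sqrt{N-x^2}\,dx=\tfrac{\pi}{4}N$ and retaining the Euler--Maclaurin endpoint correction $-\tfrac12\sqrt N$ coming from $x=0$ should yield
$$\frac{4}{\pi}\sum_{k=1}^{\lfloor\sqrt N\rfloor}\sqrt{N-k^2}=N-\frac{2}{\pi}\sqrt N+o(\sqrt N).$$
Collecting the terms then gives $\sum_{n\le N}R_{\mathcal S,\mathcal W}(n)\le N+\big(\tfrac{8c}{\pi^2}-\tfrac{2}{\pi}\big)\sqrt N+o(\sqrt N)$.

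Comparing with Theorem \ref{theorem1}, which asserts $\sum_{n\le N}R_{\mathcal S,\mathcal W}(n)-N\ge 0.193\,N^{1/2}$ for all large $N$, and letting $N\to\infty$, I obtain $\tfrac{8c}{\pi^2}-\tfrac{2}{\pi}\ge 0.193$, that is $c\ge\tfrac{\pi}{4}+\tfrac{0.193\pi^2}{8}$, contradicting the choice of $c$; this proves \eqref{green2}. I expect the only delicate point to be the evaluation of $\tfrac4\pi\sum_{k\le\sqrt N}\sqrt{N-k^2}$ to within $o(\sqrt N)$: the coefficient $\tfrac2\pi$, and hence the precise shape $\tfrac\pi4+\tfrac{0.193\pi^2}{8}$ of the final bound, hinges on capturing the $-\tfrac12\sqrt N$ endpoint term rather than merely the leading area $\tfrac\pi4 N$, and on verifying that the terms with $k$ close to $\sqrt N$ (where $N-k^2$ is small and the bound for $\mathcal W$ degrades) contribute only $o(\sqrt N)$; everything else is routine. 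As a consistency check, replacing Theorem \ref{theorem1} by the trivial bound $\sum_{n\le N}R_{\mathcal S,\mathcal W}(n)-N\ge n_0$ makes the same computation return $c\ge\tfrac\pi4$, recovering exactly \eqref{ding_lim}.
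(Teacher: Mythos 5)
Your proposal is correct and follows essentially the same route as the paper: assume the contrary, convert the hypothesis into the bound $\mathcal{W}(x)\le\frac{4}{\pi}\sqrt{x}+\frac{8c}{\pi^2}+O(1)$, use the identity $\sum_{n\le N}R_{\mathcal{S},\mathcal{W}}(n)=\sum_{k\le\sqrt N}\mathcal{W}(N-k^2)$, extract the $-\frac{2}{\pi}\sqrt N$ secondary term via Euler--Maclaurin, and contradict Theorem \ref{theorem1}. The one step you flag as delicate is handled in the paper exactly as you anticipate, and in fact only in the direction you need: restricting to square $N$, the Euler--Maclaurin correction integrals $\int_k^{k+1}t(\{t\}-\frac12)(N-t^2)^{-1/2}\,dt$ are each nonnegative, giving the clean one-sided bound $\sum_{m\le\sqrt N}\sqrt{N-m^2}\le\frac{\pi}{4}N-\frac{\sqrt N}{2}$, which suffices.
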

	\begin{remark}
		The value $\frac{\pi}{4}+\frac{0.193\pi^2}{8}=1.0235\dots$ should be compared with $\frac{\pi}{4}=0.7853\cdots$ in \cite{Ding}. It would be interesting to show that the left hand side of (\ref{green2}) is $\infty$.
	\end{remark}

	The proof of Theorem \ref{theorem1} is based on the structure of the proof in Chen--Fang \cite{C-F}. In Chen--Fang's proof, for $d_1,d_2 \in \mathcal{W}$, they considered the equation
	\begin{equation}\label{xy_eq}
		x^2+d_1 = y^2+ d_2 < N
	\end{equation}
	and required that this equation has $\gg \log N$ solutions (see \cite[Lemma 2.1]{C-F}). Thus the $\log$-factor, in (\ref{CFmain}), comes from the number of solutions. However, in Chen-Fang's arguments, the above equation has some solutions $(x,y)$ such that $|x-y|$ is very small, namely $O(1)$, and thus $x+y$ is very large, namely $O(|d_2 -d_1|)$. Since $\max\{x^2,y^2\} < N$, we must have $|d_2-d_1| \ll N^{1/2}$. For this reason, in their arguments, they restricted $d_1, d_2 \in \mathcal{W} \cap [cN^{1/2}]$ for some $c>0$. Here $[N]$ denotes the set $\{n\in \mathbb{N}: n\leq N\}$.
	
	Assume that $x>y>0$ in (\ref{xy_eq}). Our new idea is that we can always find at least one solution $(x,y)$ such that $N^{1/2} \ll x < N^{1/2}$, see (\ref{solution}). When we do this restriction, we will see that we can enlarge Cheng-Fang's $\mathcal{W} \cap [cN^{1/2}]$ to $\mathcal{W} \cap [\epsilon_0 N]$ with some small constant $\epsilon_0$. Hence we can utilize the lower bound of $|\mathcal{W} \cap [\epsilon_0 N]|$ instead of using the lower bound of  $|\mathcal{W} \cap [cN^{1/2}]|$ to obtain Theorem \ref{theorem1}. But the payoff is that since we just pick solutions $(x,y)$ such that $|x-y|$ is large, we can only show that the number of available solutions in our case is $\gg 1$ instead of $\gg \log N$.

	\begin{acknowledgment}
		The authors would like to thank Kaisa Matom\"{a}ki for her helpful comments.
		
		The first author was supported by National Natural Science Foundation of China (Grant No. 12201544), Natural Science Foundation of Jiangsu Province of China (Grant No. BK20210784) and China Postdoctoral Science Foundation (Grant No. 2022M710121). He was also supported by foundation numbers JSSCBS20211023 and YZLYJF2020PHD051. The second author was supported by UTUGS funding and was working in the Academy of Finland project No. $333707$. The third author was supported by the National Natural Science Foundation of China (Grant No. 12201291) and the Natural Science Foundation of the Higher Education Institutions of Jiangsu Province (21KJB110001).
	\end{acknowledgment}
	\maketitle
	
	\section{Proof of Theorem \ref{theorem1}}
	\setcounter{lemma}{0}
	\setcounter{theorem}{0}
	\setcounter{corollary}{0}
	\setcounter{remark}{0}
	\setcounter{equation}{0}
	\setcounter{conjecture}{0}
	The proof of our theorem relies on the following lemma, which is a refinement of the original argument of Chen and Fang \cite[Theorem 2.1]{C-F}.
	\begin{lemma}\label{lem1} Let $\delta$ and $\delta_0$ be two positive numbers satisfying
		\begin{align*}
			\begin{cases}
				\delta^2+\delta_0\le 1, &\\
				\frac{1}{16}\delta_0^2/\delta^2+\delta_0<1
			\end{cases}
		\end{align*}
		and $K = \lfloor \delta N^{1/2} \rfloor$ a positive integer and $\mathcal{D}\subseteq\mathbb{N}$ be such that $4K|d-d'$ for any $d,d' \in \mathcal{D}$.
		Then for all sufficiently large integers $N$, we have
		$$\sum_{\substack{n \le N\\ R_{\mathcal{S},\mathcal{D}}(n)\ge 1}}\left(R_{\mathcal{S},\mathcal{D}}(n)-1\right)\ge \mathcal{D}\left(\delta_0 N \right)-2.$$
	\end{lemma}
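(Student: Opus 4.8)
The plan is to bound the full sum from below by the ``excess'' of a carefully chosen subcollection of representations. Writing $R=R_{\mathcal{S},\mathcal{D}}$, I would use that for any set $P$ of pairs $(s,d)$ with $s\in\mathcal{S}$, $d\in\mathcal{D}$, $s+d\le N$, one has
\[
\sum_{\substack{n\le N\\ R(n)\ge 1}}\bigl(R(n)-1\bigr)\ \ge\ \#\{\text{distinct pairs in }P\}-\#\{\text{distinct sums }s+d,\ (s,d)\in P\},
\]
since the right-hand side is exactly the excess of $P$, and passing to a subcollection can only decrease the excess, so the full sum dominates the excess of any $P$. Thus it suffices to build a $P$ whose number of distinct pairs exceeds its number of distinct sums by at least $\mathcal{D}(\delta_0N)-2$.

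To build $P$, I would set $d_0=\min\bigl(\mathcal{D}\cap[0,\delta_0N]\bigr)$ and, for each $d\in\mathcal{D}\cap[0,\delta_0N]$, use $4K\mid d-d_0$ to write $d=d_0+4Ke_d$ with $e_d\in\mathbb{N}$. The key algebraic identity is
\[
(K+e_d)^2+d_0=(K-e_d)^2+d=:n_d,
\]
which holds because $(K+e_d)^2-(K-e_d)^2=4Ke_d=d-d_0$. Hence, whenever $d\ne d_0$ and $e_d\ne K$, the integer $n_d$ admits the two genuinely different representations $\bigl((K+e_d)^2,d_0\bigr)$ and $\bigl((K-e_d)^2,d\bigr)$ (distinct second coordinates, and $K\pm e_d\ge 1$ so both squares are positive). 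I would take $P$ to be the collection of all these pairs over the valid $d$.

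The crux is to check $n_d\le N$ for every such $d$, and this is exactly where the two hypotheses enter. When $e_d<K$ I would bound $n_d=(K-e_d)^2+d\le K^2+\delta_0N\le(\delta^2+\delta_0)N\le N$ using the first inequality. When $e_d\ge K$, since $e_d=(d-d_0)/(4K)\le \delta_0N/(4K)$ and $K=\lfloor\delta N^{1/2}\rfloor\sim\delta N^{1/2}$, I would bound $n_d=(e_d-K)^2+d\le e_d^2+\delta_0N\le\bigl(\tfrac1{16}\delta_0^2/\delta^2+\delta_0\bigr)N+o(N)$, which is $<N$ for large $N$ by the second (strict) inequality. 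This case analysis is the main technical obstacle; the strictness in the second hypothesis and the phrase ``sufficiently large $N$'' both serve to absorb the error coming from $K=\lfloor\delta N^{1/2}\rfloor$ not equalling $\delta N^{1/2}$ exactly.

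Finally I would count. The pairs $\bigl((K-e_d)^2,d\bigr)$ are pairwise distinct (distinct second coordinate $d$), the pairs $\bigl((K+e_d)^2,d_0\bigr)$ are pairwise distinct (the map $d\mapsto e_d$ is injective, so the first coordinates differ), and no pair of the first type equals one of the second because $d\ne d_0$. Writing $V$ for the number of valid $d$, the subcollection $P$ therefore contains $2V$ distinct pairs but at most $V$ distinct sums $n_d$, so its excess is at least $2V-V=V$. Since the only excluded elements are $d_0$ and the at most one $d$ with $e_d=K$, we have $V\ge\mathcal{D}(\delta_0N)-2$, which yields the claim. The two degenerate cases $d=d_0$ (where the two representations collapse) and $e_d=K$ (where $0\notin\sqrt{\mathcal{S}}$ appears) account precisely for the $-2$.
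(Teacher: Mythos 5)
Your proposal is correct and follows essentially the same route as the paper's proof: you anchor at the minimal element $d_0$ (the paper's $d_1$), use the same explicit solution $x=K+e_d$, $y=|K-e_d|$ of $x^2-y^2=4Ke_d$, run the identical case split bounding $n_d$ by $\max\{K^2,e_d^2\}+\delta_0 N$ against the two constraints, and exclude the same two degenerate elements ($d_0$ itself and the at most one $d$ with $e_d=K$), which is exactly where the $-2$ comes from. The only difference is bookkeeping --- your general ``excess of a pair set'' principle versus the paper's direct double sum over $n$ with $n-d_1\in\mathcal{S}$ --- and your phrase ``$K\pm e_d\ge 1$'' should read $|K-e_d|\ge 1$, a harmless slip since $e_d\neq K$ is enforced.
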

	\begin{proof}
		The lemma is trivial if $ \mathcal{D}(\delta_0 N) \le 2$. So we only need to consider the case $ \mathcal{D}(\delta_0 N) > 2$. In this case, we assume $  \mathcal{D}(\delta_0 N)=\ell$ and
		$$\mathcal{D}\cap\left[0,\delta_0 N \right]=\{d_1<d_2<\cdot\cdot\cdot<d_\ell\}.$$
		For any $1<s\le \ell$, we have $4K \mid d_s-d_1$, it follows that for at least $\ell-2$ positive integers $n_s$ we have $d_s-d_1=4K n_s$ with $ n_s \neq K$. It can be observed that one of the solutions of the equation
		$$x_s^2-y_s^2=d_s-d_1=4K n_s$$
		has the form
		\begin{equation}\label{solution}
			\begin{cases}
				x_s=K+n_s, \\
				y_s=|K-n_s|,
			\end{cases}
		\end{equation}
		from which we deduce that
		$$n_s=\frac{d_s-d_1}{4K}< \frac{\delta_0N}{4(\delta N^{1/2}-1)}<\left(\frac{\delta_0}{4\delta}+\varepsilon\right)N^{1/2},$$
		provided that $N$ is sufficiently large, where $\varepsilon>0$ is an arbitrarily small number which may not be the same throughout our proof.
		Moreover, the solution given by (\ref{solution}) satisfies
		\begin{align}\label{eq1}
			x_s^2+d_1=y_s^2+d_s < N,
		\end{align}
		because for $\delta$ and $\delta_0$ satisfying the constraints of our lemma, we have
		\begin{align*}
			y_s^2+d_s&\le |K-n_s|^2 + \delta_0 N \\
			&\le \max\{K^2,n_s^2\}+ \delta_0 N\\
			&< \max\left\{\delta^2 N, \left(\frac{\delta_0}{4\delta}+\varepsilon\right)^2N \right\} + \delta_0 N\\
			& < N
		\end{align*}
		once $\varepsilon$ is small enough in terms of $\delta$ and $\delta_0$. If an integer $n\le N$ can be written as the sum of $d_1$ and a square, then
		\begin{align*}
			R_{\mathcal{S},\mathcal{D}}(n)-1\ge\sum_{\substack{d_s>d_1\\ n-d_s\in \mathcal{S}}}1,
		\end{align*}
		from which we deduce that
		\begin{equation}\label{eq3}
			\sum_{\substack{n \le N\\ R_{\mathcal{S},\mathcal{D}}(n)\ge 1}}\!\!\!\left(R_{\mathcal{S},\mathcal{D}}(n)-1\right) \ge\!\!\! \sum_{\substack{n \le N\\ n-d_1\in \mathcal{S}}}\!\!\!\left(R_{\mathcal{S},\mathcal{D}}(n)-1\right)
			\ge \!\!\!\sum_{\substack{n \le N\\ n-d_1\in \mathcal{S}}}\sum_{\substack{d_s>d_1\\ n-d_s\in \mathcal{S}}}1 = \sum_{\substack{1<s\le \ell}}\sum_{\substack{n \le N\\n-d_s\in \mathcal{S}\\ n-d_1\in \mathcal{S}}}1.
		\end{equation}
		For $1<s\le \ell$ with $n_s \neq K$, by (\ref{solution}) and (\ref{eq1}) we have $n=x_{s}^2+d_1$ which satisfies $n<N$, $n-d_1\in \mathcal{S}$ and $n-d_s\in \mathcal{S}$. 
		It follows by (\ref{eq3}) that,
		$$\sum_{\substack{n \le N\\ R_{\mathcal{S},\mathcal{D}}(n)\ge 1}}\left(R_{\mathcal{S},\mathcal{D}}(n)-1\right)\ge\sum_{\substack{1<s\le \ell \\ n_s \neq K}}\sum_{\substack{n \le N\\n-d_s\in \mathcal{S}\\ n-d_1\in \mathcal{S}}}1 \ge\sum_{\substack{1<s\le \ell \\ n_s \neq K}}1\ge\ell-2=\mathcal{D}\left(\delta_0 N \right)-2.$$
	\end{proof}
	Now, we turn to prove Theorem \ref{theorem1}.
	\begin{proof}[Proof of Theorem \ref{theorem1}]
		Let $N$ be a sufficiently large integer and $\mathcal{W}$ an additive complement of $\mathcal{S}$.
		Let $K = \lfloor \delta N^{1/2} \rfloor$, where $\delta>0$ would be decided later. We follow the proof of Chen and Fang \cite{C-F}.  For any $1\le j\le 4K$, let
		$$\mathcal{W}_j=\{w \in \mathcal{W}:w\equiv j\pmod{4K}\}.$$
		Then we have
		$$\mathcal{W}=\bigcup_{j=1}^{4K}\mathcal{W}_j$$
		with $\mathcal{W}_i\bigcap \mathcal{W}_j=\varnothing$ for $i\neq j$. Thus, for some positive integer $N_0$ depending on $\mathcal{W}$,
		\begin{align}\label{eq2-2}
			\sum_{n=1}^N\left(R_{\mathcal{S},\mathcal{W}}(n)-1\right) &=\sum_{n=1}^N\left(\sum_{j=1}^{4K}R_{\mathcal{S},\mathcal{W}_j}(n)-1\right)\nonumber\\
			&\ge  \sum_{n=1}^N\sum_{\substack{j=1\\ R_{\mathcal{S},\mathcal{W}_j}(n)\ge 1}}^{4K}\left(R_{\mathcal{S},\mathcal{W}_j}(n)-1\right) -N_0\nonumber\\&= \sum_{j=1}^{4K}\sum_{\substack{n=1\\ R_{\mathcal{S},\mathcal{W}_j}(n)\ge 1}}^{N}\left(R_{\mathcal{S},\mathcal{W}_j}(n)-1\right)-N_0.
		\end{align}
		By Lemma \ref{lem1} with $\mathcal{D}=\mathcal{W}_j$, we have
		\begin{align}\label{eq2-3}
			\sum_{\substack{n=1\\ R_{\mathcal{S},\mathcal{W}_j}(n)\ge 1}}^{N}\left(R_{\mathcal{S},\mathcal{W}_j}(n)-1\right)\ge \mathcal{W}_j\left(\delta_0N\right)-2
		\end{align}
		for any $1\le j\le 4K$, where $\delta$ and $\delta_0$ satisfy the restrictions
		\begin{align}\label{*}
			\begin{cases}
				\delta^2+\delta_0\le 1, &\\
				\frac{1}{16}\delta_0^2/\delta^2+\delta_0<1.
			\end{cases}
		\end{align}
		Combining (\ref{eq2-2}) and (\ref{eq2-3}), we obtain
		\begin{align}\label{eq2-4}
			\sum_{n=1}^N\left(R_{\mathcal{S},\mathcal{W}}(n)-1\right)\ge \sum_{j=1}^{4K}\left(\mathcal{W}_j\left(\delta_0 N \right)-2\right)-N_0=\mathcal{W}\left(\delta_0 N \right)-8K-N_0.
		\end{align}
		From (\ref{eq1-1}) we know that
		\begin{align}\label{eq2-5}
			\mathcal{W}\left(\delta_0 N \right)\ge \left(\frac{4}{\pi}\sqrt{\delta_0}-\varepsilon\right)N^{1/2}
		\end{align}
		for arbitrarily small $\varepsilon$ and thus we conclude that
		$$\sum_{n=1}^N\left(R_{\mathcal{S},\mathcal{W}}(n)-1\right)\ge \left(\frac{4}{\pi}\sqrt{\delta_0}-8\delta-\varepsilon\right)N^{1/2}-N_0$$
		by (\ref{eq2-4}) and (\ref{eq2-5}).
		A numerical calculation of $\frac{4}{\pi}\sqrt{\delta_0}-8\delta$ with the constrains (\ref{*}) by computer shows that $$\delta=0.022, \quad \delta_0=0.084$$
		and
		$$\frac{4}{\pi}\sqrt{\delta_0}-8\delta\ge0.19302$$
		are admissible.
	\end{proof}
	
	\section{Proof of Theorem \ref{theorem2}}
	\setcounter{lemma}{0}
	\setcounter{theorem}{0}
	\setcounter{corollary}{0}
	\setcounter{remark}{0}
	\setcounter{equation}{0}
	\setcounter{conjecture}{0}
	\begin{proof}[Proof of Theorem \ref{theorem2}]
		We follow the proof of \cite[Theorem 1]{Ding}. Suppose the contrary, i.e., 
		$$\limsup_{n\rightarrow \infty}\frac{\frac{\pi^2}{16}n^2-w_n}{n}=\beta<\frac{\pi}{4}+\frac{0.193\pi^2}{8}:=\gamma.$$
		Then there exists a number $n_1>0$ such that for $\sigma =\frac{1}{2}\left(\gamma-\beta\right)$ and all $n>n_1$,  we have
		$$\frac{\frac{\pi^2}{16}n^2-w_n}{n}\leq \beta+\frac{1}{2}\left(\gamma-\beta\right)=\gamma-\sigma,$$
		which means that
		$$w_n\geq\frac{\pi^2}{16}n^2-\left(\gamma-\sigma \right) n=\frac{\pi^2}{16}\left( n-\frac{8\gamma}{\pi^2}+\frac {8}{\pi^2} \sigma \right)^2-\frac{4}{\pi^2}\left(\gamma-\sigma\right)^2$$
		for all $n>n_1$. Thus there exists an integer $n_2\geq n_1$ such that for all $n>n_2$ we have
		\begin{align}\label{Formula}
			w_n\geq\frac{\pi^2}{16}\left( n-\frac{8\gamma}{\pi^2}+\frac {4}{\pi^2} \sigma \right)^2.
		\end{align}
		This would imply that
		\begin{align}\label{formula}
			\mathcal{W}(x)\leq\frac{4}{\pi}\sqrt{x}+\frac{8\gamma}{\pi^2}-\frac{4}{\pi^2}\sigma
		\end{align}
		for all $x>n_2$.  In fact, suppose that $\mathcal{W}(x)=\ell$, then from (\ref{Formula}) we get
		$$\frac{\pi^2}{16}\left( \ell-\frac{8\gamma}{\pi^2}+\frac {4}{\pi^2} \sigma \right)^2\le w_\ell\le x,$$
		from which (\ref{formula}) follows immediately.
		For any positive integer $N$, we have
		\begin{eqnarray}\label{3-1}
			\sum\limits_{n=1}^{N}R_{\mathcal{S},\mathcal{W}}(n)&=&\sum\limits_{\substack{m^2+w\leq N\\w\in \mathcal{W}}}1\nonumber\\&=&\sum\limits_{m\leq\sqrt{N}}\sum\limits_{\substack{w\leq N-m^2\\w\in \mathcal{W}}}1 \nonumber\\
			&=&\sum\limits_{m\leq\sqrt{N}}\mathcal{W}(N-m^2)\nonumber\\
			&\leq&\sum\limits_{m\leq\sqrt{N}}\left(\frac{4}{\pi}\sqrt{N-m^2}+\frac{8\gamma}{\pi^2}-\frac{4}{\pi^2}\sigma\right)+O(1)\nonumber\\
			&=&\frac{4}{\pi}\sum\limits_{m\leq\sqrt{N}}\sqrt{N-m^2}+\left(\frac{8\gamma}{\pi^2}-\frac{4}{\pi^2}\sigma\right)\sqrt{N}+O(1),
		\end{eqnarray}
		where the implied constant depends only on $n_2$.
		
		On the other hand, for square integers $N$, as in \cite[Section 2]{Ding} by the Euler--Maclaurian summation formula we have
		\begin{eqnarray}\label{3-2}
			\sum\limits_{m\leq\sqrt{N}}\sqrt{N-m^2}=\frac{\pi}{4}N-\frac{\sqrt{N}}{2}-
			\sum\limits_{k=0}^{\sqrt{N}-1}\int\limits_{k}^{k+1}\frac{t\left(\{t\}-\frac{1}{2}\right)}{\sqrt{N-t^2}}dt.
		\end{eqnarray}
		Furthermore, as in \cite[Section 2]{Ding} we still have
		\begin{eqnarray}\label{3-3}
			\int\limits_{k}^{k+1}\frac{t\left(\{t\}-\frac{1}{2}\right)}{\sqrt{N-t^2}}dt\ge0.
		\end{eqnarray}
		Combining  (\ref{3-2}) and (\ref{3-3}) gives us
		\begin{equation}\label{3-4}
			\sum\limits_{m\leq\sqrt{N}}\sqrt{N-m^2}\leq\frac{\pi}{4}N-\frac{\sqrt{N}}{2}.
		\end{equation}
		Hence by (\ref{3-1}) and (\ref{3-4}), we obtain that for large square integers $N$,
		\begin{equation}\label{3-5}
			\sum\limits_{n=1}^{N}R_{\mathcal{S},\mathcal{W}}(n)\leq
			N-\left(\frac{2}{\pi}-\frac{8\gamma}{\pi^2}+\frac{4}{\pi^2}\sigma\right)\sqrt{N}+O(1)\end{equation}
		which contradicts Theorem \ref{theorem1}.
	\end{proof}

\end{document}